\newtheorem{Lm}{Lemma}
\newtheorem{exmp}{Example}
\DeclareMathAlphabet\mathbfcal{OMS}{cmsy}{b}{n}
\begin{document}

\begin{frontmatter}
\title{Square-root filtering via covariance SVD factors in the accurate continuous-discrete extended-cubature Kalman filter}

\author[CEMAT]{Maria V.~Kulikova\corref{cor}} \ead{maria.kulikova@ist.utl.pt} \cortext[cor]{Corresponding
author.}
\author[CEMAT]{Gennady Yu.~Kulikov} \ead{gkulikov@math.ist.utl.pt}

\address[CEMAT]{CEMAT (Center for Computational and Stochastic Mathematics), Instituto Superior T\'{e}cnico, Universidade de Lisboa, \\ Av. Rovisco Pais 1,  1049-001 Lisboa, Portugal}

\begin{abstract}
This paper continues our research devoted to an accurate nonlinear Bayesian filters' design. Our solution implies numerical methods for solving ordinary differential equations (ODE) when propagating the mean and error covariance of the dynamic state. The key idea is that an accurate implementation strategy implies the methods with a discretization error control involved. This means that the filters' moment differential equations are to be solved accurately, i.e. with negligible error. In this paper, we explore the continuous-discrete extended-cubature Kalman filter that is a hybrid method between Extended and Cubature Kalman filters (CKF). Motivated by recent results obtained for the continuous-discrete CKF in Bayesian filtering realm, we propose the numerically stable (to roundoff) square-root approach within a singular value decomposition (SVD) for the hybrid filter. The new method is extensively tested on a few application examples including  stiff systems.
\end{abstract}

\begin{keyword}
Cubature Kalman filter, square-root filtering, singular value decomposition, ODE solvers.
\end{keyword}

\end{frontmatter}


\section{Introduction}

There is a growing body of literature that recognizes an importance of effective variable-stepsize ODE solvers with automatic discretization error control facilities in nonlinear Bayesian filtering realm. They are utilized for solving the so-called {\it moment differential equations} derived for each filtering framework and allow for an accurate propagation step of various filtering methods. The accuracy and good estimation quality are achieved due to an automatic discretization error control involved. Our solution is grounded in the variable-stepsize Nested Implicit Runge-Kutta (NIRK) pairs with built-in local and global error controls proposed in~\cite{2013:Kulikov:IMA}. Previously, we have suggested the adaptive NIRK-based hybrid method between Extended and Cubature Kalman filters (EKF-CKF) in~\cite{2017:Kulikov:ANM}. Additionally, we have derived the robust square-root implementations for the mentioned filtering framework within the Cholesky decomposition of the filters' covariance matrices. Our current research is motivated by the most recent result obtained for the Cubature Kalman filter (CKF), which is the singular value decomposition (SVD) approach developed in~\cite{2020:Automatica:Kulikova}. The goal of this paper is to derive the SVD-based square-root method for the accurate EKF-CKF suggested in~\cite{2017:Kulikov:ANM}. This expands the traditionally used Cholesky-based framework on the SVD square-rooting solution.

The continuous-discrete CKF methods have been proposed for estimating a hidden dynamic state of nonlinear stochastic systems in~\cite{2010:Haykin,2018:Haykin}. The recent SVD-based square-root CKF algorithms developed in~\cite{2020:Automatica:Kulikova} belong to the class of {\it factored-form} (square-root) implementations. They yield the improved numerical stability with respect to roundoff errors by ensuring the positive (semi-) definiteness and symmetric form of the computed error covariance matrix~\cite{2015:Grewal:book,2010:Grewal}. The earlier research on the stable CKF implementation framework has brought the traditional square-root methods via Cholesky decomposition for both the covariance~\cite{2009:Haykin,2010:Haykin} and information filtering~\cite{2017:Arasaratnam,2013:Chandra}. Compared to the Cholesky solution cited, the novel SVD-type square-rooting idea  provide users with an extra information about the filters matrix structures involved and, hence, some valuable insights about the underlying estimation
process. Indeed, the SVD-based implementations are capable to oversee the eigenfactors (i.e. the eigenvalues and eigenvectors) while estimation process, i.e. the singularities may be revealed as soon as they appear; see the discussion of some other benefits in~\cite{1988:Oshman,1983:Ham} and the SVD-based implementation methods for the classical Kalman filter in~\cite{1986:Oshman,1988:Oshman,1992:Wang,2017:Kulikova:IET,2017:Tsyganova:IEEE}.

The discussed continuous-discrete CKF estimators in~\cite{2010:Haykin,2018:Haykin,2020:Automatica:Kulikova} are designed by using the Euler-Maruyama method and It\^{o}-Taylor expansion for discretizing the underlying stochastic differential equations (SDEs) of the system at hand. The following drawbacks of this continuous-discrete filtering approach are worth to be mentioned. First, SDEs solvers are fixed step-size numerical integration schemes~\cite{1999:Kloeden:book} and, hence, the resulting CKF methods require a special manual tuning from the users prior to filtering for generating the mesh. For instance, this property  makes the novel SVD-based CKF estimators in~\cite{2020:Automatica:Kulikova} incapable to process the missing measurement cases, accurately. They also do not provide a good estimation quality when solving estimation problems with irregular and/or long sampling intervals. Next, any SDEs solver implies no opportunity to control and/or bound the occurred discretization error due to its stochastic nature. An alternative implementation framework assumes the derivation of the related filters' moment differential equations and then an utilization of the numerical methods derived for solving ordinary differential equations (ODEs); e.g., see the discussion in~\cite{2018:Haykin,2012:Frogerais,2014:Kulikov:IEEE,2016:Kulikov:SISCI}. This approach leads to a more accurate implementation framework because of the discretization error control techniques involved. The moment differential equations are derived for the continuous-discrete unscented Kalman filter (UKF) and CKF estimators in~\cite{2007:Sarkka,2012:Sarkka} as well as for the extended Kalman filter (EKF) in~\cite{1970:Jazwinski:book}. It is worth noting here that the continuous-discrete UKF methods with the NIRK pairs and built-in local and global error controls have been recently developed in conventional and Cholesky-based square-root forms in~\cite{KuKu17SP} and~\cite{KuKu20aANM}, respectively. Alternative advanced ODE solvers with automatic stepsize selection and error control facilities, especially those grounded in Runge-Kutta, general linear and peer methods published recently in~\cite{Bu08,Co18,GoHe10,GoHe12,Ja09,KuWe10SISCI,KuWe11CAM,KuWe15SISCI,ScWe04,ScWe05BIT,KuWe20bANM,WeKu14CAM,WeKu17CAM,WeKu12ANM,WeSc09,abdi2021global}, can contribute to nonlinear Bayesian filtering realm for treating complicated state estimation scenarios.

The goal of this paper is to design the numerically stable (to roundoff) SVD factorization-based method for the NIRK-based mixed-type EKF-CKF estimator in~\cite{2017:Kulikov:ANM}. The mentioned hybrid filter provides a good balance in trading between estimation accuracy and computational demand because the EKF moment differential equations are not coupled and can be solved separately~\cite{2017:Kulikov:IEEE:mix}. This is not a case for the UKF and CKF estimators; e.g., see the discussion in~\cite{2017:Kulikov:ANM,2008:Mazzoni}. Thus, the key feature of the mixed-type method is to apply the advanced ODE numerical integration schemes with the discretization error control involved for solving the filter's mean equations, only. The mesh is generated automatically according to the utilized stepsize selection mechanization based on the chosen error control in order to keep the discretization error less than the pre-defined tolerance given by users. The related covariance equations are solved separately and without any monitoring technique, but on the same adaptive mesh generated. This mixed-type approach simplifies the practical implementations meanwhile it still provides an accurate mean propagation with the reduced discretization error according to the given tolerance value in automatic mode. However, an additional special task is to ensure the theoretical properties of the error covariance matrix computed. In this paper, we suggest the SVD-based solution via the filter covariance eigenfactors. All of this yields an {\it accurate} and {\it robust} (with respect to roundoff) mixed-type continuous-discrete EKF-CKF filtering. Finally, the novel method is fairly tested on a few application examples including the so-called stiff systems. The results of numerical experiments clearly indicate a superior performance of the new continuous-discrete estimator over the previously suggested CKF algorithms cited in this paper.

The paper is organized as follows. Section~\ref{problem:statement} gives a brief overview of the continuous-discrete NIRK-based hybrid EKF-CKF estimator, which is under examination in this paper. Section~\ref{main:result} suggests the new SVD-based square-root solution for the EKF-CKF. The results of numerical experiments are discussed in Section~\ref{numerical:experiments}. Finally, some open problems are mentioned in Section~\ref{conclusion} as well as their possible solutions, which might be developed in future.

\section{Continuous-discrete NIRK-based Hybrid Extended-Cubature Kalman Filter} \label{problem:statement}

Consider continuous-discrete stochastic system of the form
\begin{align}
dx(t) & = f\bigl(t,x(t)\bigr)dt+Gd\beta(t), \quad t>0,  \label{eq1.1} \\
z_k   & =  h(k,x(t_{k}))+v_k, \quad k =1,2,\ldots \label{eq1.2}
\end{align}
where  $x(t)$ is the $n$-dimensional unknown state vector to be estimated and  $f:\mathbb R\times\mathbb
R^{n}\to\mathbb R^{n} $ is the time-variant drift function. The process uncertainty is modelled by the additive noise term where
$G \in \mathbb R^{n\times q}$ is the time-invariant diffusion matrix and $\beta(t)$ is the $q$-dimensional Brownian motion whose increment $d\beta(t)$ is Gaussian white process independent of $x(t)$ and has the covariance $Q\,dt>0$. Finally, the $m$-dimensional measurement vector $z_k = z(t_{k})$ comes at some discrete-time points $t_k$ with the sampling rate (sampling period) $\Delta_k=t_{k}-t_{k-1}$. The measurement noise term $v_k$ is assumed to be a white Gaussian noise with the zero mean and known covariance $R_k>0$, $R \in \mathbb R^{m\times m}$. Finally, the initial state $x(t_0)$ and the noise processes are assumed to be statistically independent, and $x(t_0) \sim {\mathcal N}(\bar x_0,\Pi_0)$, $\Pi_0 > 0$.

Following the CKF estimation methodology proposed in~\cite{2009:Haykin,2010:Haykin}, the third-degree spherical-radial cubature rule is utilized for computing the involved $n$-dimensional Gaussian-weighted integrals. For that, the cubature nodes (vectors) are defined as follows:
\begin{align}
 {\cal X}_{i}& = S_{P} \xi_i+\hat x, \quad i = 1, \ldots 2n,& \mbox{with} && \xi_j = \sqrt{n} e_j, \mbox{ and } \xi_{n+j}=-\sqrt{n} e_j, \quad j = 1, \ldots, n \label{cub:points}
\end{align}
where $e_j$ denotes the $j$-th unit coordinate vector in $\mathbb R^n$ and $n$ is the dimension of the state vector to be estimated.
Additionally, the term $\hat x$ is the estimate of the state vector and $S_{P}$ stands for a square-root factor of error covariance matrix $P$, i.e. the matrix is factorized as follows: $P = S_{P}S_{P}^{\top}$. Following~\cite{2009:Haykin,2010:Haykin,2018:Haykin}, the Cholesky decomposition is traditionally utilized for determining the matrix square-root factor, i.e. $P = P^{1/2}P^{{\top}/2}$ where $S_{P}:=P^{1/2}$ is a lower or upper triangular matrix with positive diagonal entries. However, the matrix square-root might be defined in other ways. In this paper, we apply the SVD spectral factorization and, hence, the SVD-based matrix square-root is defined as follows. For a symmetric positive-definite matrix, the factorization yields $P = Q_{P}D_{P}Q_{P}^{\top}$ where $Q_{P}$ is an orthogonal matrix and the diagonal part $D_{P}$ contains the singular values of $P$. Thus, the matrix square-root utilized in~\eqref{cub:points} might be defined by the formula $S_{P}:=Q_{P}D_{P}^{1/2}$. It is worth noting here that the Cholesky-based square-root factor is always a triangular matrix, meanwhile the SVD-based square-root factor is a full matrix and might be rectangular; e.g., when a number of singular values is $r < n$.

The CKF measurement update step for processing the measurement data $z_k$ at time instance $t_k$ can be written in a simple matrix-vector form as follows: given the predicted estimate $\hat x_{k|k-1}$ and covariance $P_{k|k-1}$, perform the following steps
\begin{align}
&\mbox{Generate the CKF nodes}: & {\cal X}_{i,k|k-1}& =S_{P_{k|k-1}}\xi_i+\hat x_{k|k-1}, \quad i=1,\ldots, 2n,  \label{start:cub}\\
&\mbox{Propagate the vectors}: & {\cal Z}_{i,k|k-1}& =h\bigl(k,{\cal X}_{i,k|k-1}\bigr), \label{propagateH} \\
&\mbox{Collect by columns}: & {\cal Z}_{k|k-1} &=\bigl[{\cal Z}_{1,k|k-1},\ldots,{\cal Z}_{2n,k|k-1} \bigr], \;{\cal X}_{k|k-1}=\bigl[{\cal X}_{1,k|k-1},\ldots,{\cal X}_{2n,k|k-1}\bigr],\label{matZ} \\
&\mbox{Compute the vector}: & \hat z_{k|k-1}  &=\frac{1}{2n}{\cal Z}_{k|k-1} {\mathbf 1}_{2n}, \\
&\mbox{Define the matrices}: & {\mathbb X}_{k|k-1} &=\frac{1}{\sqrt{2n}}\bigl({\cal X}_{k|k-1}\!-{\mathbf
1}_{2n}^{\top}\otimes \hat x_{k|k-1}\bigr), {\mathbb Z}_{k|k-1} =\frac{1}{\sqrt{2n}}\bigl({\cal Z}_{k|k-1}\!-{\mathbf
1}_{2n}^{\top}\otimes \hat z_{k|k-1}\bigr) \label{end:cub}
\end{align}
where ${\mathbf 1}_{2n}$ is the unitary column of size $2n$ and $I_{2n}$ is the identity matrix of that size, the symbol $\otimes$ is the Kronecker tensor product. Having defined the centered matrices ${\mathbb X}_{k|k-1} \in {\mathbb R}^{n\times 2n}$ and ${\mathbb Z}_{k|k-1} \in {\mathbb R}^{m\times 2n}$, compute the filtered error covariance matrix and the state estimate in a simple and elegant way as follows:
\begin{align}
&\mbox{Compute the residual covariance}: & R_{e,k} & ={\mathbb Z}_{k|k-1}{\mathbb Z}_{k|k-1}^{\top}+R_k, \label{ckf:rek} \\
&\mbox{Find the cross-covariance}: & P_{xz,k} & ={\mathbb X}_{k|k-1}{\mathbb Z}_{k|k-1}^{\top}, \label{cross-cov-ckf} \\
&\mbox{Calculate the cubature gain}: & {\mathbb K}_{k} &=P_{xz,k}R_{e,k}^{-1}, \label{ckf:gain} \\
&\mbox{Find the filtered estimate}: & \hat x_{k|k} &=\hat x_{k|k-1}+{\mathbb K}_k(z_k-\hat z_{k|k-1}), \label{ckf:f:x} \\
&\mbox{Find the filter covariance}: & P_{k|k} &=P_{k|k-1} - {\mathbb K}_k R_{e,k} {\mathbb K}_k^{\top}. \label{ckf:f:p}
\end{align}

The mixed-type EKF-CKF estimator under examination is coupled with the {\it continuous-discrete} EKF implementation framework for performing the time update step in an accurate way. More precisely, consider the EKF moment differential equations in~\cite{1970:Jazwinski:book}:
\begin{align}
    \frac{d\hat x(t)}{dt} & = f\bigl(t,\hat x(t)\bigr),  \label{eq2.1} \\
    \frac{dP(t)}{dt}& = F\bigl(t,\hat x(t)\bigr) P(t)+P(t)F^{\top}\bigl(t,\hat x(t)\bigr)+ GQG^{\top}.  \label{eq2.2}
 \end{align}
The system should be solved for each sampling interval $[t_{k-1},t_k]$ with the initial conditions $\hat x(t_{k-1}) =\hat x_{k-1|k-1}$ and  $P(t_{k-1})=P_{k-1|k-1}$ where the matrix $F\bigl(t,x(t)\bigr):={\partial_{x} f\bigl(t,x(t)\bigr)}$ is the Jacobian matrix. Thus, the continuous-discrete EKF propagation step involves a numerical integration technique for solving~\eqref{eq2.1}, \eqref{eq2.2} on interval $[t_{k-1}, t_{k}]$ in order to determine the one-step ahead predicted estimate  $\hat x_{k|k-1}:=\hat x(t_{k})$ and covariance $P_{k|k-1}:=P(t_{k})$.

The size of system~\eqref{eq2.1}, \eqref{eq2.2} is large and equals to $n+n^2$ where $n$ is a number of unknown states to be estimated. Besides, the equations in~\eqref{eq2.1} may have a complicated underlying dynamics; e.g., they might be highly nonlinear, depending on the given drift function in the state-space model~\eqref{eq1.1}, \eqref{eq1.2}. At the same time, the equations in~\eqref{eq2.2} are linear with respect to elements of the covariance matrix. Thus, it makes sense to handle~\eqref{eq2.1} and~\eqref{eq2.2} separately, paying a special attention to an accurate numerical solution of system~\eqref{eq2.1}. For the EKF strategy it is possible to utilize different numerical integration schemes for solving~\eqref{eq2.1} and~\eqref{eq2.2} because they are not coupled~\cite{2008:Mazzoni}. This simplifies calculations and reduces computational time because a computationally heavy discretization error control is applied for solving~\eqref{eq2.1}, only.

We utilize the advanced variable-stepsize NIRK formulas of order~6 with the built-in automatic combined local-global error control given in~\cite[eqs.(27)~--~(32)]{2017:Kulikov:ANM}. More details of the numerical scheme applied are presented in Appendix. While solving~\eqref{eq2.1} the adaptive  mesh is generated automatically in the sampling interval $[t_{k-1},t_k]$  according to the utilized discretization error control
\[\{t_l\}_{l=0}^{end}:=\left\{t_{l+1}=t_{l}+\tau_l, l=0,1,\ldots,end-1, t_0=t_{k-1},t_{end}=t_{k}\right\} \mbox{ with } \tau_l:=t_{l+1}-t_{l}. \]

The discretization error control techniques keep the  error arisen at the prediction filtering step less than the pre-defined tolerance $\epsilon_g$ given by users. Next, we re-use the mesh~$\{t_l\}_{l=0}^{end}$ for solving~\eqref{eq2.2} with no control technique applied and by a more simple scheme suggested in~\cite{2008:Mazzoni}. More precisely, having computed $\hat x_{l}$ at time $t_{l}$ on the mesh~$\{t_l\}_{l=0}^{end}$, use the same step size $\tau_l$ and the computed stage value ${\hat x}_{l2}^{3}$ from~\cite[eqs.(27)]{2017:Kulikov:ANM}, to calculate the error covariance matrix by the formula
\begin{equation}\label{eq2.21}
P_{l+1}=M_{l+1/2} P_{l} M_{l+1/2}^{\top}+\tau_{l} K_{l+1/2} GQG^{\top} K_{l+1/2}^{\top}
\end{equation}
where $t_{l+1/2}:=t_{l}+\tau_{l}/2$ is the mid-point of the $(l+1)$-st step. We stress that the mid-point $\hat x_{l+1/2}:={\hat x}_{l2}^{3}$ is already computed by the numerical method presented in Appendix, and
\begin{align}
K_{l+1/2} & =\left[I_n - \frac{\tau_{l}}{2}F(t_{l+1/2}, \hat x_{l+1/2})\right]^{-1}, & M_{l+1/2} & = K_{l+1/2}\left[I_n + \frac{\tau_{l}}{2}F(t_{l+1/2}, \hat x_{l+1/2})\right]. \label{eq:Kl}
\end{align}

Further details and summary of the {\it accurate continuous-discrete} EKF time update step performed within the suggested numerical scheme can be found in~\cite[p.~267]{2017:Kulikov:ANM} and in Appendix of this paper. In the next section, we derive a mathematically equivalent implementation method via the covariance SVD factors for improving its numerical robustness in a finite precision arithmetics.

\section{The SVD-based estimation method for the hybrid Extended-Cubature Kalman filter} \label{main:result}

Development of {\it factored-form} (square-root) filtering methods requires a re-derivation of the conventional filtering equations in terms of the matrix square-root factors involved. The methodology yields a reliable estimation approach for processing ill-conditioned state estimation scenarios. In particular, the square-root implementation way ensures the symmetric form and positive (semi-) definiteness of the filters' covariance matrices, reduces sensitivity and improves stability with respect to roundoff.

\begin{Lm}[Time update of the spectral factors] \label{Lemma:1}
Given the measurement updated factors, i.e. the orthogonal $Q_{P_{k-1|k-1}}$ and diagonal $D^{1/2}_{P_{k-1|k-1}}$ of $P_{k-1|k-1}$, set the values $Q_{P_{0}}:=Q_{P_{k-1|k-1}}$ and $D^{1/2}_{P_{0}}:=D^{1/2}_{P_{k-1|k-1}}$ on an integration mesh~$\{t_l\}_{l=0}^{end}:=\left\{t_{l+1}=t_{l}+\tau_l, l=0,1,\ldots,end-1, t_0=t_{k-1},t_{end}=t_{k}\right\}$ in the sampling interval $[t_{k-1},t_k]$. Having known the SVD factors $Q_{P_{l}}$ and $D^{1/2}_{P_{l}}$ of $P_l$ at the current node $t_l$, define the augmented matrix $A_{t_l}$ as
\begin{equation}
A_{t_l} = \left[M_{l+1/2} Q_{P_{l}}D_{P_{l}}^{1/2}, \;\; \sqrt{\tau_{l}}K_{l+1/2}G Q_{Q}D^{1/2}_{Q} \right] \label{svd:TU:pre-array}
\end{equation}
where $Q_{Q}$ and $D^{1/2}_{Q}$ are the SVD factors of the process covariance matrix $Q$ and matrices $M_{l+1/2}$, $K_{l+1/2}$ are computed by formula~\eqref{eq:Kl}. Next, decompose the pre-array $A_{t_l}$ into the SVD-post-arrays as follows:
\begin{equation}
A_{t_l} = W_{t_l} \; [S_{t_l} \quad 0] \; V_{t_l}^{\top},  \label{svd:TU:post-array}
\end{equation}
then the time updated SVD factors of $P_{l+1}$ at the new node $t_{l+1}$ are
\begin{align}
Q_{P_{l+1}} & = W_{t_l}, & D^{1/2}_{P_{l+1}} & = S_{t_l}. \label{proof:TU}
\end{align}
At the last point $t_{end}$ of the mesh introduced in the sampling interval $[t_{k-1},t_k]$, we have $Q_{P_{k|k-1}}:=Q_{P_{end}}$ and $D^{1/2}_{P_{k|k-1}}:=D^{1/2}_{P_{end}}$.
\end{Lm}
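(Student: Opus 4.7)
The plan is to verify that the pre-array $A_{t_l}$ satisfies $A_{t_l}A_{t_l}^{\top} = P_{l+1}$, and then to use the fact that for a symmetric positive semi-definite matrix the SVD coincides with its eigendecomposition. The overall structure is then a straightforward induction along the mesh $\{t_l\}_{l=0}^{end}$ introduced in the sampling interval $[t_{k-1},t_k]$.

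First, I would expand $A_{t_l}A_{t_l}^{\top}$ by block multiplication. The first block yields $M_{l+1/2}\,Q_{P_{l}}D_{P_{l}}^{1/2}(D_{P_{l}}^{1/2})^{\top}Q_{P_{l}}^{\top}M_{l+1/2}^{\top}$, which collapses to $M_{l+1/2}P_l M_{l+1/2}^{\top}$ under the inductive hypothesis $P_l = Q_{P_{l}}D_{P_{l}}Q_{P_{l}}^{\top}$; the second block collapses analogously to $\tau_l K_{l+1/2} G Q G^{\top} K_{l+1/2}^{\top}$ using the SVD $Q = Q_{Q}D_{Q}Q_{Q}^{\top}$. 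Adding the two blocks reproduces exactly the right-hand side of the covariance recursion~\eqref{eq2.21}, which establishes $A_{t_l}A_{t_l}^{\top}=P_{l+1}$.

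Next, I would substitute the post-array factorization~\eqref{svd:TU:post-array} into the same product. Orthogonality of $V_{t_l}$ removes the right factor and the zero padding annihilates the superfluous columns, so $A_{t_l}A_{t_l}^{\top}=W_{t_l}S_{t_l}S_{t_l}^{\top}W_{t_l}^{\top}=W_{t_l}S_{t_l}^{2}W_{t_l}^{\top}$, where the last equality uses that $S_{t_l}$ is diagonal with non-negative entries. Comparing this with the previous step gives the eigendecomposition $P_{l+1}=W_{t_l}S_{t_l}^{2}W_{t_l}^{\top}$; since $P_{l+1}$ is symmetric and positive semi-definite, an orthogonal-times-diagonal-times-orthogonal factorization with non-negative diagonal is its SVD (unique up to ordering and signs of singular vectors), so the identification $Q_{P_{l+1}}=W_{t_l}$ and $D^{1/2}_{P_{l+1}}=S_{t_l}$ is legitimate.

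Finally, the inductive step is launched from the initialization $Q_{P_{0}}:=Q_{P_{k-1|k-1}}$, $D^{1/2}_{P_{0}}:=D^{1/2}_{P_{k-1|k-1}}$, which by assumption factor $P(t_{k-1})=P_{k-1|k-1}$; iterating the argument for $l=0,1,\ldots,end-1$ produces the claimed SVD factors of $P_{k|k-1}=P(t_k)$ at $t_{end}$. The only subtle point I expect is keeping the block-size bookkeeping of the post-array consistent: depending on whether a thin or full SVD is returned by the numerical routine, the zero block in $[S_{t_l}\;\; 0]$ has to be shaped so that $[S_{t_l}\;\; 0][S_{t_l}\;\; 0]^{\top}=S_{t_l}S_{t_l}^{\top}$ holds exactly; once that convention is fixed the remainder is routine algebraic verification.
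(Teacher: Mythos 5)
Your proposal is correct and follows essentially the same route as the paper's own proof: both verify $A_{t_l}A_{t_l}^{\top}=P_{l+1}$ by expanding the blocks against the covariance recursion~\eqref{eq2.21}, then substitute the SVD post-array to read off $P_{l+1}=W_{t_l}S_{t_l}^{2}W_{t_l}^{\top}$ and identify the factors. Your added remarks on the induction along the mesh and on the thin-versus-full SVD bookkeeping are sensible refinements of the same argument.
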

\begin{proof}
Let us consider equation~\eqref{eq2.21} for calculating $P_{l+1}$. It can be written by using the spectral factors as follows:
\begin{align}
Q_{P_{l+1}}D_{P_{l+1}}Q_{P_{l+1}}^{\top} & = M_{l+1/2} Q_{P_{l}}D_{P_{l}}Q_{P_{l}}^{\top} M_{l+1/2}^{\top} +\tau_{l} K_{l+1/2} GQ_{Q}D_{Q}Q_{Q}^{\top}G^{\top} K_{l+1/2}^{\top}.
\end{align}
Since the equation has a symmetric form, we may factorize
\begin{align*}
Q_{P_{l+1}}D_{P_{l+1}}Q_{P_{l+1}}^{\top} & = \left[M_{l+1/2} Q_{P_{l}}D_{P_{l}}^{1/2}, \;\; \sqrt{\tau_{l}}K_{l+1/2}G Q_{Q}D^{1/2}_{Q} \right] \left[M_{l+1/2} Q_{P_{l}}D_{P_{l}}^{1/2}, \;\; \sqrt{\tau_{l}}K_{l+1/2}G Q_{Q}D^{1/2}_{Q}\right]^{\top}
\end{align*}
where the SVD factors $Q_{P_{l}}$ and $D_{P_{l}}$ of the matrix $P_{l}$ are assumed to be known from the previous recursion step. Having denoted the multiplier by
\begin{equation} \label{eq:A1}
A_{t_l} = \left[M_{l+1/2} Q_{P_{l}}D_{P_{l}}^{1/2}, \;\; \sqrt{\tau_{l}}K_{l+1/2}G Q_{Q}D^{1/2}_{Q}\right],
\end{equation}
we have $Q_{P_{l+1}}D_{P_{l+1}}Q_{P_{l+1}}^{\top} = A_{t_l}A_{t_l}^{\top}$. Having replaced $A_{t_l}$ with its SVD factors in~\eqref{svd:TU:post-array}, we get  $A_{t_l}A_{t_l}^{\top} = W_{t_l} S_{t_l}^2 W_{t_l}^{\top}$ and the result follows, i.e. $W_{t_l}:=Q_{P_{l+1}}$ and $S_{t_l}^2:=D_{P_{l+1}}$.
\end{proof}

\begin{table}[ht!]
\caption{A summary of calculations of the SVD-based NIRK-type EKF-CKF algorithm} \label{tab:1}                                                                                                                                                                                                                                                                                                                                                                                                                                                                                                                                                                                                                                                                \begin{center}
\begin{tabular}{ll}
\hline
\texttt{Initial conditions}: & Set the initial $\hat x_{0|0} = \bar x_0$ and SVD factors $Q_{P_{0|0}} = Q_{\Pi_0}$, $D^{1/2}_{P_{0|0}} = D^{1/2}_{\Pi_0}$; \\
\hhline{~|-}
\texttt{Time Update} ($[t_{k-1},t_k]$):  &  \textit{at each node $t_l$ of the generated mesh $\{t_l\}_{l=0}^{end}$ perform} \\
- State Estimate & Given tolerance $\epsilon_g$, integrate~\eqref{eq2.1} with the error control in~\cite[eqs.(27)~--~(32)]{2017:Kulikov:ANM}; \\
- SVD processor & $A_{t_l} \leftarrow W_{t_l} \; [S_{t_l} \quad 0] \; V_{t_l}^{\top}$ where $A_{t_l} = \left[M_{l+1/2} Q_{P_{l}}D_{P_{l}}^{1/2},\quad  \sqrt{\tau_{l}}K_{l+1/2}G Q_{Q}D^{1/2}_{Q} \right]$;\\
& Read-off: $Q_{P_{l+1}}= W_{t_l}$, $D^{1/2}_{P_{l+1}} = S_{t_l}$. Set $Q_{P_{k|k-1}}=Q_{P_{end}}$, $D^{1/2}_{P_{k|k-1}}=D^{1/2}_{P_{end}}$; \\
\hhline{~|-}
\texttt{Measurement Update}: & Apply CKF formulas~\eqref{start:cub}~--~\eqref{end:cub}; \\
- SVD processor & $B_{t_l} \leftarrow W_{t_l} \; [S_{t_l} \quad 0] \; V_{t_l}^{\top}$  where the pre-array is $B_{t_l} = \left[{\mathbb Z}_{k|k-1}, \;\; Q_{R_k}D_{R_k}^{1/2}\right]$;  \\
& Read-off $Q_{R_{e,k}}= W_{t_l}$, $D^{1/2}_{R_{e,k}} = S_{t_l}$ and find the cross-covariance in~\eqref{cross-cov-ckf};\\
- State Estimate &  $\hat x_{k|k}=\hat x_{k|k-1}+{\mathbb K}_k(z_k-\hat z_{k|k-1})$ where ${\mathbb K}_{k}=P_{xz,k}Q_{R_{e,k}}D_{R_{e,k}}^{-1}Q_{R_{e,k}}^{\top}$; \\
- SVD processor & $C_{t_l} \leftarrow W_{t_l} \; [S_{t_l} \quad 0] \; V_{t_l}^{\top}$ where $C_{t_l} = \left[{\mathbb X}_{k|k-1}-{\mathbb K}_{k}{\mathbb Z}_{k|k-1}, \quad {\mathbb K}_{k}Q_{R_k}D_{R_k}^{1/2}\right]$;  \\
& Read-off the updated SVD factors: $Q_{P_{k|k}}= W_{t_l}$, $D^{1/2}_{P_{k|k}} = S_{t_l}$. \\
\hline
\end{tabular}
\end{center}
\end{table}

\begin{Lm}[Measurement update of the spectral factors] \label{Lemma:2}
Given the predicted factors: the orthogonal $Q_{P_{k|k-1}}$ and diagonal $D^{1/2}_{P_{k|k-1}}$ of $P_{k|k-1}$, generate the cubature nodes by~\eqref{start:cub}, propagate through nonlinear measurement function in~\eqref{propagateH} and find the centered matrix ${\mathbb Z}_{k|k-1} \in {\mathbb R}^{m\times 2n}$ by formula~\eqref{matZ}. Next, define the augmented matrix $B_{t_l}$ as follows:
\begin{equation}
B_{t_l} = \left[{\mathbb Z}_{k|k-1}, \;\; Q_{R_k}D_{R_k}^{1/2}\right] \label{svd:MU1:pre-array}
\end{equation}
where $Q_{R_k}$ and $D^{1/2}_{R_k}$ are the SVD factors of the measurement noise covariance $R_k$. Next, use SVD to decompose the pre-array $B_{t_l}$ into the post-arrays as follows:
\begin{equation}
B_{t_l} = W_{t_l} \; [S_{t_l} \quad 0] \; V_{t_l}^{\top},  \label{svd:MU1:post-array}
\end{equation}
then the SVD factors of the residual covariance $R_{e,k}$ in~\eqref{ckf:rek} are
\begin{align}
Q_{R_{e,k}} & = W_{t_l}, & D^{1/2}_{R_{e,k}} & = S_{t_l}. \label{proof:MU1}
\end{align}

Having computed $Q_{R_{e,k}}$ and $D^{1/2}_{R_{e,k}}$, we can calculate the cross-covariance in~\eqref{cross-cov-ckf} and the cubature gain in the stable square-root form as follows:
\begin{equation}
{\mathbb K}_{k}=P_{xz,k}Q_{R_{e,k}}D_{R_{e,k}}^{-1}Q_{R_{e,k}}^{\top} \label{cub:gain:svd:new}
\end{equation}
and next define the augmented pre-array $C_{t_l}$ as
\begin{equation}
C_{t_l} = \left[{\mathbb X}_{k|k-1}-{\mathbb K}_{k}{\mathbb Z}_{k|k-1}, \;\; {\mathbb K}_{k}Q_{R_k}D_{R_k}^{1/2}\right] \label{svd:MU2:pre-array}
\end{equation}
where the centered matrices ${\mathbb Z}_{k|k-1}$ and ${\mathbb X}_{k|k-1}$ are calculated by the CKF rule and formula~\eqref{matZ}.
Again, use SVD to decompose the pre-array $C_{t_l}$ into the following post-arrays:
\begin{equation}
C_{t_l} = W_{t_l} \; [S_{t_l} \quad 0] \; V_{t_l}^{\top},  \label{svd:MU2:post-array}
\end{equation}
then the SVD factors of the filtered error covariance matrix $P_{k|k}$ are
\begin{align}
Q_{P_{k|k}} & = W_{t_l}, & D^{1/2}_{P_{k|k}} & = S_{t_l}. \label{proof:MU2}
\end{align}
 \end{Lm}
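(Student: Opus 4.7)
The plan is to follow the pattern set in Lemma~\ref{Lemma:1}: establish two algebraic identities, namely $B_{t_l}B_{t_l}^{\top}=R_{e,k}$ and $C_{t_l}C_{t_l}^{\top}=P_{k|k}$, and then invoke the SVD decompositions of the pre-arrays, $B_{t_l}=W_{t_l}\,[S_{t_l}\;0]\,V_{t_l}^{\top}$ and $C_{t_l}=W_{t_l}\,[S_{t_l}\;0]\,V_{t_l}^{\top}$, to read off the spectral factors. Using orthogonality of $V_{t_l}$, each decomposition yields $B_{t_l}B_{t_l}^{\top}=W_{t_l}S_{t_l}^{2}W_{t_l}^{\top}$ and similarly for $C_{t_l}$, from which formulas~\eqref{proof:MU1} and~\eqref{proof:MU2} follow by the uniqueness of the symmetric spectral decomposition.

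For the first pre-array, I would substitute the SVD $R_k=Q_{R_k}D_{R_k}Q_{R_k}^{\top}$ into the CKF residual covariance~\eqref{ckf:rek} and factor the right-hand side as a block outer product
\begin{equation*}
R_{e,k}={\mathbb Z}_{k|k-1}{\mathbb Z}_{k|k-1}^{\top}+Q_{R_k}D_{R_k}^{1/2}(Q_{R_k}D_{R_k}^{1/2})^{\top}=B_{t_l}B_{t_l}^{\top},
\end{equation*}
which is essentially the same device used in Lemma~\ref{Lemma:1}. The square-root gain formula~\eqref{cub:gain:svd:new} is an immediate consequence: since $R_{e,k}>0$, its singular values are strictly positive, and one simply substitutes $R_{e,k}^{-1}=Q_{R_{e,k}}D_{R_{e,k}}^{-1}Q_{R_{e,k}}^{\top}$ into ${\mathbb K}_k=P_{xz,k}R_{e,k}^{-1}$.

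The main obstacle is the identity $C_{t_l}C_{t_l}^{\top}=P_{k|k}$, because $C_{t_l}$ has the Joseph-stabilized shape rather than the naive symmetric form of~\eqref{ckf:f:p}. Expanding
\begin{equation*}
C_{t_l}C_{t_l}^{\top}=({\mathbb X}_{k|k-1}-{\mathbb K}_k{\mathbb Z}_{k|k-1})({\mathbb X}_{k|k-1}-{\mathbb K}_k{\mathbb Z}_{k|k-1})^{\top}+{\mathbb K}_k R_k {\mathbb K}_k^{\top}
\end{equation*}
and grouping the CKF outer products via~\eqref{ckf:rek} and~\eqref{cross-cov-ckf}, the right-hand side becomes
\begin{equation*}
P_{k|k-1}-P_{xz,k}{\mathbb K}_k^{\top}-{\mathbb K}_k P_{xz,k}^{\top}+{\mathbb K}_k R_{e,k}{\mathbb K}_k^{\top}.
\end{equation*}
Inserting $P_{xz,k}={\mathbb K}_k R_{e,k}$ (which holds by~\eqref{ckf:gain} since $R_{e,k}$ is symmetric and invertible) makes the middle two terms each equal to ${\mathbb K}_k R_{e,k}{\mathbb K}_k^{\top}$, leaving $P_{k|k-1}-{\mathbb K}_k R_{e,k}{\mathbb K}_k^{\top}$, which is exactly~\eqref{ckf:f:p}. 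This verifies the required identity; the SVD of $C_{t_l}$ then yields the spectral factors of $P_{k|k}$ as in~\eqref{proof:MU2}, completing the proof.
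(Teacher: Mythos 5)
Your proposal is correct and follows essentially the same route as the paper: the paper's proof simply observes that the argument mirrors Lemma~\ref{Lemma:1}, citing the symmetric (Joseph-type) form $P_{k|k}=({\mathbb X}_{k|k-1}-{\mathbb K}_k{\mathbb Z}_{k|k-1})({\mathbb X}_{k|k-1}-{\mathbb K}_k{\mathbb Z}_{k|k-1})^{\top}+{\mathbb K}_kR_k{\mathbb K}_k^{\top}$ from the literature rather than deriving it. Your explicit verification of that identity (using ${\mathbb X}_{k|k-1}{\mathbb X}_{k|k-1}^{\top}=P_{k|k-1}$ and $P_{xz,k}={\mathbb K}_kR_{e,k}$) is sound and merely fills in the step the paper delegates to its references.
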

\begin{proof}
The derivation is similar to the proof of Lemma~\ref{Lemma:1}  and follows from equation~\eqref{ckf:rek} for calculating $R_{e,k}$ and from the symmetric form  equation for calculating $P_{k|k}$ derived in~\cite{2009:Haykin}:
\begin{align}
 P_{k|k} & = \left({\mathbb X}_{k|k-1}-{\mathbb K}_{k}{\mathbb Z}_{k|k-1}\right) \left({\mathbb X}_{k|k-1}-{\mathbb K}_{k}{\mathbb Z}_{k|k-1}\right)^{\top} + {\mathbb K}_{k}R_k {\mathbb K}_{k}^{\top}. \label{eq:pcov:cubature}
\end{align}

The detailed proof of Lemma~\ref{Lemma:2} can be also found in~\cite{2020:Automatica:Kulikova}.
\end{proof}

Table~\ref{tab:1} summarizes the {\it continuous-discrete} mixed-type EKF-CKF method proposed.

\section{Numerical experiments} \label{numerical:experiments}

The purpose of this section is to assess a performance of the newly-suggested spectral mixed-type EKF-CKF method in comparison with the existing CKF algorithms: (i) the original continuous-discrete CKF based on the It\^{o}-Taylor expansion (IT-1.5 CKF) suggested in~\cite{2010:Haykin}, and (ii) its SVD-based variant (IT-1.5 CKF SVD) recently derived in~\cite{2020:Automatica:Kulikova}. For a fair comparative study, we examine three test problems taken from various application fields.

\begin{exmp}[Target Tracking Test] \label{ex:1} When performing a coordinated turn in the horizontal plane, the aircraft's dynamics obeys equation~\eqref{eq1.1} with the following drift function and diffusion matrix
\[
f(\cdot)=\left[\dot{\epsilon}, \; -\omega \dot{\eta}, \;\dot{\eta},\; \omega \dot{\epsilon}, \;\dot{\zeta}, \; 0, \; 0\right] \mbox{ and }
G={\rm diag}\left\{0,\; \sigma_1, \; 0, \; \sigma_1, \; 0, \; \sigma_1, \; \sigma_2\right\}
\] where $\sigma_1=\sqrt{0.2}$, $\sigma_2=0.007$ and $\beta(t)$ is the {\it standard} Brownian motion, i.e. $Q=I_7$.
The state vector consists of seven entries, i.e. $x(t)= [\epsilon, \; \dot{\epsilon}, \; \eta, \; \dot{\eta}, \; \zeta, \; \dot{\zeta}, \; \omega]^{\top}$, where $\epsilon$, $\eta$, $\zeta$ and $\dot{\epsilon}$, $\dot{\eta}$, $\dot{\zeta}$ stand for positions and corresponding velocities in the Cartesian coordinates at time $t$, and $\omega(t)$ is the (nearly) constant turn rate. The initial conditions are $\bar x_0=[1000\,\mbox{\rm m}, 0\,\mbox{\rm m/s}, 2650\,\mbox{\rm m},150\,\mbox{\rm m/s}, 200\,\mbox{\rm m}, 0\,\mbox{\rm m/s},\omega^\circ/\mbox{\rm s}]^{\top}$ and $\Pi_0=\mbox{\rm diag}(0.01\,I_7)$. We fix the turn rate to $\omega=3^\circ/\mbox{\rm s}$.

{\bf Case 1: The original problem}. The measurement model is taken from~\cite{2010:Haykin}, but accommodated to MATLAB as follows:
\[
\begin{bmatrix}
r_k \\
\theta_k \\
\phi_k
  \end{bmatrix}
 =
  \begin{bmatrix}
  \sqrt{\epsilon^2_k+\eta^2_k+\zeta^2_k} \\
  {\rm atan2}\left({\eta_k},{\epsilon_k}\right) \\
  {\rm atan}\left(\frac{\zeta_k}{{\sqrt{\epsilon^2_k+\eta^2_k}}}\right)
  \end{bmatrix}
  + v_k,
  \begin{array}{l}
    v_k \sim {\cal N}(0,R); \\
    R  ={\rm diag}\left\{\sigma_r^2,\sigma_\theta^2,\sigma_\phi^2\right\}
\end{array}
\]
where the implementation of the MATLAB command {\tt atan2} in target tracking is explained in~\cite{brehard2007hierarchical} in more details.
The observations $z_k = [r_k, \theta_k, \phi_k]^{\top}$, $k=1,\ldots, K$ come at some constant sampling intervals $\Delta = [t_{k-1}, t_k]$. The radar is located at the origin and is equipped to measure the range $r_k$, azimuth angle~$\theta$ and the elevation angle~$\phi$. The measurement noise covariance matrix is constant over time and $\sigma_r=50$~m, $\sigma_\theta=0.1^\circ$, $\sigma_\phi=0.1^\circ$.

{\bf Case 2: The ill-conditioned tests}. Following the ill-conditioned test problem proposed in~\cite[Example~7.2]{2015:Grewal:book} and discussed at the first time in~\cite[Examples 7.1 and 7.2]{1969:Dyer}, we design the following measurement scheme for provoking the filters' numerical instability due to roundoff:
\begin{align*}
z_k & =
\begin{bmatrix}
1 & 1 & 1 & 1 & 1 &  1 &  1\\
1 & 1 & 1 & 1 & 1 &  1 &  1 +\delta
\end{bmatrix}
x_k +
\begin{bmatrix}
v_k^1 \\
v_k^2
\end{bmatrix}, \;
  \begin{array}{l}
    v_k \sim {\cal N}(0,R); \\
     R=\delta^{2}I_2
\end{array}
\end{align*}
where parameter $\delta$ is used for simulating roundoff effect. This increasingly ill-conditioned target tracking scenario assumes that $\delta\to 0$, i.e. $\delta=10^{-1},10^{-2},\ldots,10^{-13}$.
\end{exmp}

First, we consider the original problem and repeat the experiments in~\cite{2010:Haykin,2020:Automatica:Kulikova} for estimating the unknown state vector $\hat x_{k|k}$ on the interval $[0s, 150s]$ with varying sampling periods $\Delta =1, \ldots, 12$ (s). Following~\cite{2010:Haykin}, the square estimation errors are calculated at all sampling points by taking the norm of the difference between the  ``true'' solution $x^{true}_k$ and the estimated one $\hat x_{k|k}$ for all $k=1, \ldots, K$. Next, $M=100$ Monte Carlo simulations are performed for calculating the accumulated root mean square error (ARMSE) by averaging over $100$ trials. Following~\cite{2010:Haykin}, we additionally compute the ARMSE in position, i.e. $\mbox{\rm ARMSE}_p$, as follows:
\begin{align}
\mbox{\rm ARMSE}_p & =\Bigl[\frac{1}{M
K}\sum_{M=1}^{100}\sum_{k=1}^K\bigl(\epsilon^{\rm true}_k-\hat
\epsilon_{k|k}\bigr)^2  +\bigl(\eta^{\rm true}_k-\hat
\eta_{k|k}\bigr)^2\!+\bigl(\zeta^{\rm true}_k-\hat
\zeta_{k|k}\bigr)^2\Bigr]^{1/2}. \label{eq:acc}
\end{align}
and define the filters failure when $\mbox{\rm ARMSE}_p> 500$ (m).

 All estimators under examination are tested at the same conditions, i.e. with the same simulated ``true'' state trajectory, the same measurement data and the same initial conditions. The fixed stepsize IT-1.5 CKF methods are implemented with $m=64$ subdivisions. The NIRK-based EKF-CKF does not require the number of subdivisions to be given but it demands the tolerance value. We implement the variable stepsize algorithms with $\epsilon_g = 10^{-4}$. The results of this set of numerical experiments are summarized in Table~\ref{tab:acc} where the estimation accuracies and the average CPU time (in sec.) are collected. We also calculate the estimation accuracy and CPU time benefits (in \%) of using the suggested in this paper mixed-type EKF-CKF instead of the original CKF method, i.e. we compute $\texttt{CKF}_{accuracy|CPU}/\texttt{EKF-CKF}_{accuracy|CPU}-1$  in per cents.

\begin{table}[t!]
\caption{The computed $\mbox{\rm ARMSE}_p$ (m), average CPU time (s) and the benefits (\%) of the mixed EKF-CKF ($\epsilon_g = 10^{-4}$) over the IT-1.5 CKF ($m=64$) obtained for {\bf Case 1} study in Example~1.} \label{tab:acc}
{\scriptsize
\begin{tabular}{r||r|r|r|r|r||r|r|r|r|r}
\hline
& \multicolumn{5}{c||}{\bf The computed accuracy $\mbox{\rm ARMSE}_p$ (m)} & \multicolumn{5}{c}{\bf The average CPU time (s)}\\
\hline
$\Delta$(s) & \multicolumn{2}{c|}{\bf IT-1.5 CKF} & \multicolumn{2}{c|}{\bf mixed EKF-CKF} & {\bf Benefit} & \multicolumn{2}{c|}{\bf IT-1.5 CKF} & \multicolumn{2}{c|}{\bf mixed EKF-CKF} & {\bf Benefit} \\
\cline{2-11}
& original~\cite{2010:Haykin} & SVD~\cite{2020:Automatica:Kulikova} & original~\cite{2017:Kulikov:ANM} & {\bf new} SVD & (\%) & original~\cite{2010:Haykin} & SVD~\cite{2020:Automatica:Kulikova} & original~\cite{2017:Kulikov:ANM} & {\bf new} SVD & (\%) \\
\hline
    2 &  82.9 &  82.9 &  93.4 &  93.4 & -11\% &    0.52&    0.59&    0.72&    0.84&  -28\% \\
    4 &  95.0 &  95.0 & 113.0 & 113.0 & -15\% &    0.25&    0.29&    0.53&    0.62&  -52\%\\
    6 & 110.4 & 110.4 & 127.3 & 127.3 & -13\% &    0.17&   0.19&    0.48&    0.56&  -64\%\\
    8 & 121.8 & 121.8 & 142.8 & 142.8 & -14\% &    0.12&    0.14&    0.43&    0.49&  -70\%\\
   10 & 160.0 & 160.0  & 144.4 & 144.4 &  10\% &    0.10&    0.11&    0.42&    0.48&  -75\% \\
   \hline
   12 & $>$500 & $>$500  & 159.4 & 159.4 & ---  &  --- &  ---  &  0.40 & 0.45 & --- \\
     & {\bf fails} & {\bf fails} & & & & {\bf fails}& {\bf fails}  &   &  & --- \\
   \hline
\end{tabular}
}
\end{table}


Having analyzed the results obtained for {\bf Case 1} study in Example~1, we make a few conclusions:
 \begin{itemize}
 \item In average, the IT-1.5 CKF filtering approach on $\approx 12$\%  outperforms the hybrid EKF-CKF estimator for the estimation accuracy on short sampling periods. It is inline with nonlinear filtering theory because the moment approximation error arisen within the CKF methodology is less than within the mixed-type EKF-CKF strategy. However, this holds true only for small sampling intervals. As can be seen, for $\Delta =10$(s) the NIRK-based hybrid filters outperform the IT-1.5 CKF estimators for accuracy on about $10$\% and for $\Delta \ge 12$(s) the original and SVD-based IT-1.5 CKFs ultimately fail. It happens due to the accumulated discretization error and the lack of controlling techniques. The accumulated errors destroy any CKF-type filters based on the SDEs numerical integration schemes.
 \item The main problem of all It\^{o}-Taylor CKF implementations is that the sufficient number of subdivisions heavily depends on the problem at hand, and this optimal value should be defined in advance. Any IT-1.5 CKF implementation is accurate when the pre-fixed number of subdivisions is enough to ensure a small discretization error at the propagation step and it rapidly fails when this is not a case. In contrast, the filters based on numerical solution of the related moment differential equations allow for discretization error control and, hence, they suggest an accurate estimation way for any sampling interval length including the scenario with irregular sampling periods due to missing measurements.
 \item The NIRK-based EKF-CKF and its SVD-based counterpart work accurately for any sampling interval length. Indeed, the built-in advanced numerical integration scheme with the adaptive error control strategy ensures (in automatic mode) that the occurred discretization error is insignificant.          The adaptive nature of the NIRK-based EKF-CKF estimators make them flexible and convenient for using in practice. No preliminary tuning is required by users, except the tolerance value to be given. However, the error control technique involved in the NIRK-based EKF-CKF filters make them $\approx 0.5$ times slower than the methods without estimation accuracy control. This is the price to be paid for an accurate time propagation step. Evidently, this value grows as the sampling interval becomes longer because the accuracy requirements at the time update steps make the estimation problem hard.
\item The fixed-stepsize IT-1.5 CKF schemes are fast to execute, but the estimation quality might be very poor (e.g., for irregular sampling intervals). As can be seen, for $\Delta =10$ the NIRK-based EKF-CKF is on $\approx 10\%$ more accurate than the IT-1.5 CKF, but it is slower with the factor $0.75$. However, for the next $\Delta =12$ the IT-1.5 CKF method fails although the NIRK-based EKF-CKF works accurately and it takes $\approx 0.5$ (s), in average. We may conclude that the suggested NIRK-based filtering provides a good balance between the improved estimation accuracy including some useful features (e.g., an ability to manage irregular and long sampling intervals) and computational demands.
\end{itemize}

Let us next explore Case 2 study in Example~1. Following the results in Table~\ref{tab:acc}, the CPU time of the SVD-based implementations is a bit higher than in the related standard algorithms without SVD factorization. Meanwhile the estimation accuracies of the SVD-based and standards algorithms are the same for a well-conditioned tests in Case 1 study of Example~1. This substantiates the algebraic equivalence between the standard and SVD-based implementation frameworks proved in Section~3. To investigate the difference in the numerical robustness (with respect to roundoff errors) of the filtering methods under examination, we explore the  ill-conditioned estimation scenarios. In engineering literature, the first ill-conditioned measurement schemes have been discussed in details in~\cite[Examples 7.1 and 7.2]{1969:Dyer}. Nowadays, these ill-conditioned tests are widely used for a comparative study of  any KF-like estimator; see the third reason of the KF divergence due to singularity arisen in the covariance matrix $R_{e,k}$ caused by roundoff as discussed in~\cite[p.~288]{2015:Grewal:book}. More precisely, when the ill-conditioning parameter $\delta$ tends to a machine precision limit,
one observes a degradation of any KF-like method due to roundoff errors and singularities appeared in the matrix $R_{e,k}$ to be inverted for computing the gain matrix $\mathbb K_k$. Meanwhile any spectral SVD-based filter implies the inverse of the diagonal SVD factor of the matrix $R_{e,k}$, i.e. $D_{R_{e,k}}$, only. The readers are refereed to equation~\eqref{cub:gain:svd:new} for more details. Thus, we are going to observe a difference between two computational strategies: the conventional filtering and the SVD-based spectral implementations. For that, we repeat the numerical experiments in Case~1 of Example~1 but for the following ill-conditioning parameter $\delta$ values: (i) $\delta = 10^{-1}$ represents a well-conditioned scenario, (ii) $\delta = 10^{-5}$ corresponds to a moderate problem ill-conditioning, and (iii) $\delta = 10^{-12}$ yields a strong ill-conditioned scenario.

\begin{figure}
\begin{tabular}{cc}
\includegraphics[width=0.5\textwidth]{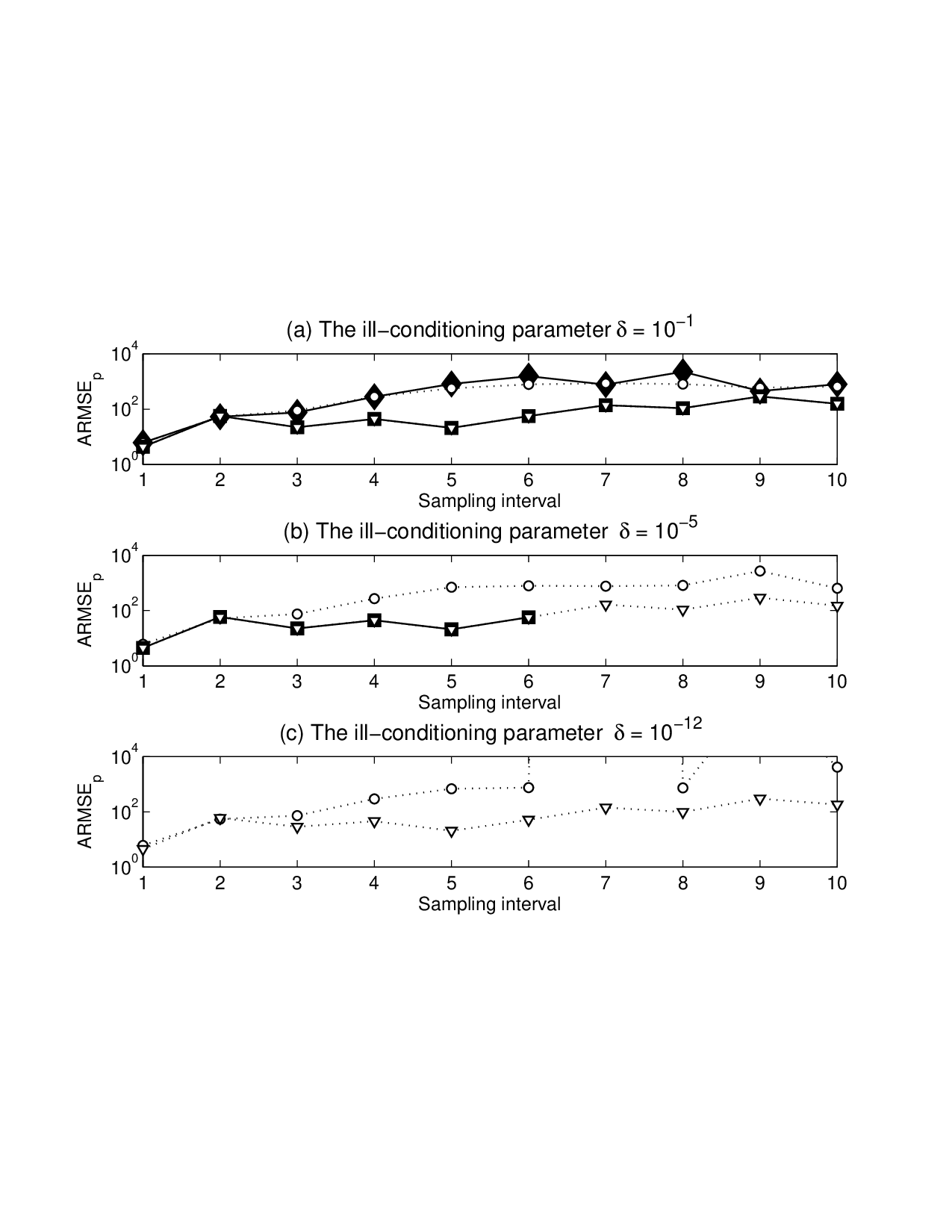} & \includegraphics[width=0.5\textwidth]{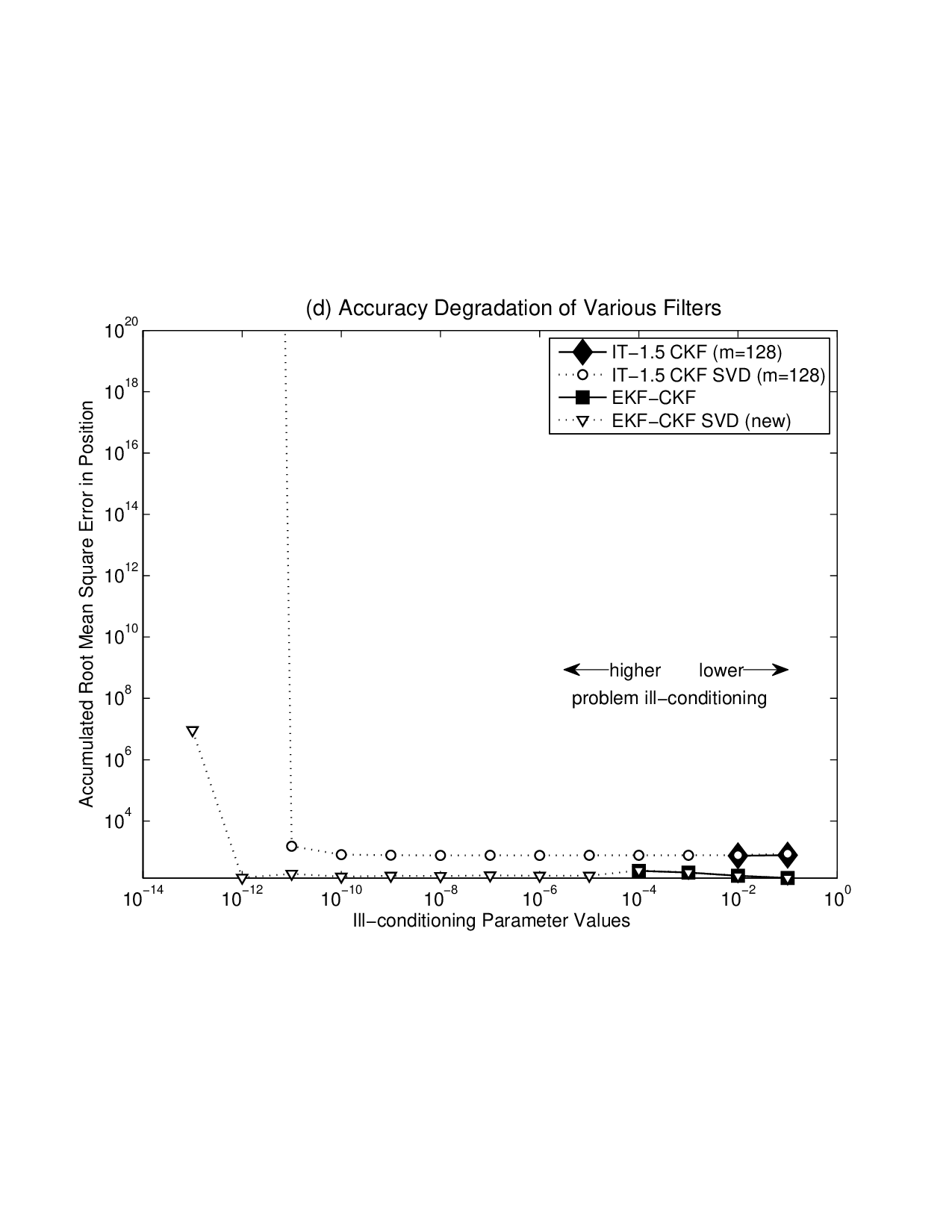}
\end{tabular}
\caption{The degradation of the filters' accuracy while ill-conditioned scenario provided by Case~2 in Example~\ref{ex:1}. The left figure: the IT-1.5 CKF with $m=128$ (solid line with $\blacklozenge$), the SVD-based IT-1.5 CKF with $m=128$ (dotted line with $\bigcirc$), the mixed EKF-CKF with $\epsilon_g = 10^{-4}$ (solid line with $\blacksquare$) and new SVD EKF-CKF $\epsilon_g = 10^{-4}$ (dotted line with $\bigtriangledown$). The right figure: the filters degradation when $\Delta =7$(sec) for various ill-conditioning parameter values $\delta$, which tend to machine precision.} \label{fig:2}
\end{figure}

Having analyzed Fig.~\ref{fig:2}(a), we conclude that all filtering algorithms under examination work accurately in the well-conditioned scenario, i.e. when $\delta$ is large. It is interesting to note that in contrast to Case~1 study, we observe a better performance of the NIRK-based EKF-CKF filters then the IT-1.5 CKF methods (with $m=128$ subdivisions). Then, Fig.~\ref{fig:2}(b) illustrates a degradation of all conventional implementations while the problem ill-conditioning grows. As can be seen, the original IT-1.5 CKF with $m=128$ is not able to solve the problem even for $\Delta =1(s)$ and, hence, it is not plotted on Fig.~\ref{fig:2}(b). Meanwhile the conventional EKF-CKF accurately solves the estimation problem for short sampling intervals, only. In contrast, their SVD-based counterparts maintain a high estimation quality for any sampling interval length, i.e. they accurately treat the moderate ill-conditioned scenario in a robust way. Finally, from Fig.~\ref{fig:2}(c) we observe that only the new SVD-based EKF-CKF filter is able to solve the strong ill-conditioned estimation problem. Any conventional implementation fails at $\delta = 10^{-12}$ even for the short sampling interval $\Delta=1(s)$ and, hence, they are not plotted on Fig.~\ref{fig:2}(c). More importantly, the SVD-based IT-1.5 CKF implementation previously suggested in~\cite{2020:Automatica:Kulikova} is also not stable and deviates from the 'true' state for some $\Delta$ values significantly.  It seems that the arisen discretization and roundoff errors are accumulated in the fixed stepsize filters based on the It\^{o}-Taylor expansion and this amplifies their effect yielding the filters failure.

In our last set of numerical experiments, we fix the sampling interval to $\Delta = 7(s)$ and examine the filters numerical stability in a finite precision arithmetics. Fig.~\ref{fig:2}(d) illustrates the resulted errors $\mbox{\rm ARMSE}_p$ computed for various values of the ill-conditioning parameter $\delta$ when it tends to a machine precision limit. Having analyzed the obtained outcomes, we conclude that the original IT-1.5 CKF algorithm (with $m=128$ subdivisions) is the fastest method to diverge due to roundoff. It is able to solve the ill-conditioned test problems when $\delta$ is large, i.e. $\delta \ge 10^{-2}$. Next, the conventional NIRK EKF-CKF filter is a bit more stable but it fails when $\delta < 10^{-4}$. As can be seen, the SVD-based implementations are the most robust methods with respect to roundoff. Both the SVD-based IT-1.5 CKF and the novel SVD- NIRK-based EKF-CKF designed in this paper work accurately and manage the state estimation problem till $\delta = 10^{-11}$. Again, the NIRK-based EKF-CKF filter provides a better estimation quality than the CKF estimator. Finally, the SVD-based IT-1.5 CKF proposed in~\cite{2020:Automatica:Kulikova} fails at $\delta = 10^{-12}$ meanwhile the novel SVD EKF-CKF manages this ill-conditioned estimation problem accurately.

\begin{exmp}[Gas-phase reversible reaction in CSTR] \label{ex:2}
Following~\cite{KuKu19IJRNC}, the gas-phase reversible reaction with three species denoted as $A$, $B$ and $C$ are given as follows:
\begin{equation}\label{eq2.1}
A\quad {{k_1\atop\rightleftharpoons}\atop {\scriptstyle k_2}}\quad B+C,\quad
2B\quad {{k_3\atop\rightleftharpoons}\atop {\scriptstyle k_4}}\quad B+C,
\end{equation}
where the fixed coefficients $k_1=0.5$, $k_2=0.05$, $k_3=0.2$ and $k_4=0.01$. The stoichiometric matrix $\nu$ of reaction (\ref{eq2.1}) and the reaction rates $r$ are chosen to be
\begin{equation}\label{eq2.2}
\nu=\left[
\begin{array}{ccc}
-1 & 1 & 1\\
0 & -2 & 1
\end{array}
\right], \quad
r=\left[
\begin{array}{c}
k_1 c_A  -k_2 c_B c_C\\
k_3 c_B^2 -k_4 c_C
\end{array}
\right],
\end{equation}
in which $c_A$ stands for the concentration of the species $A$ in moles per liter, and so on. Thus, the state of this chemical system is defined by the vector $x(t)=\left[c_A\quad c_B\quad c_C\right]^\top$.

We explore reaction~\eqref{eq2.1}, \eqref{eq2.2} in a continuously stirred tank reactor (CSTR). The well-mixed, isothermal CSTR is simulated by the following SDE:
\begin{equation}\label{eq2.5:new}
dx(t)=\left[\frac{Q_f}{V_R} c_f - \frac{Q_0}{V_R} x(t) + \nu^\top r\right] dt+G dw(t), \quad t>0,
\end{equation}
where $c_f=x_0$, the coefficients $Q_f=Q_0=1$, $V_R=100$, and the matrix $\nu$ and the vector $r$ are given in (\ref{eq2.2}). The continuous-time process noise $dw(t)$ is the zero-mean white Gaussian process with the diagonal covariance matrix $Q\,dt=\mbox{\rm diag}\{10^{-6}/\delta,10^{-6}/\delta,10^{-6}/\delta\}\,dt$, and the constant diffusion matrix is $G=I_3$. The initial values are $\bar x_0 = [0.5, 0.05, 0]^{\top}$ and $\Pi_0 = I_3$.

{\bf Case 1: The original problem}. The measurement equation is given in the following form:
\begin{equation}\label{eq4.5:new}
z_k=\left[RT\quad RT\quad RT\right] x_k + v_k, \quad
  \begin{array}{l}
    v_k \sim {\cal N}(0,R_z); \\
    R_z  = 0.25^2
\end{array}
\end{equation}
where $R$ is the ideal gas constant and $T$ is the reactor temperature in Kelvin, i.e. $RT=32.84$.

{\bf Case 2: The ill-conditioned tests}.  We design the following measurement scheme for provoking the filters' numerical instability due to roundoff:
\begin{align*}
z_k & =
RT \begin{bmatrix}
1 & 1 & 1 \\
1 & 1 & 1 +\delta
\end{bmatrix}
x_k +
\begin{bmatrix}
v_k^1 \\
v_k^2
\end{bmatrix}, \;
  \begin{array}{l}
    v_k \sim {\cal N}(0,R_z); \\
     R_z=\delta^{2}I_2
\end{array}
\end{align*}
where parameter $\delta$ is used for simulating roundoff effect. This increasingly ill-conditioned target tracking scenario assumes that $\delta\to 0$, i.e. $\delta=10^{-1},10^{-2},\ldots,10^{-13}$.\end{exmp}

We first explore Case~1 scenario of Example~2 and perform the following numerical tests. The SDE in~\eqref{eq2.5:new} is simulated with a small stepsize $\delta_t = 10^{-3}$ on interval $[0, 30]$(s) to generate the true state vector. Next, equation~\eqref{eq4.5:new} is utilized for creating the history of simulated measurements with various sampling rates $\Delta =0.5, 1,\ldots, 4.5, 5$(s). For each fixed $\Delta$(s) value, the filtering problem is solved to get the estimated hidden state. We compute the accumulated root mean square error (ARMSE) by averaging over $100$ Monte Carlo runs and three entries of the state vector $x(t) = \left[c_A\quad c_B\quad c_C\right]^\top$ as follows:
 \begin{align}
\mbox{\rm ARMSE} & =\Bigl[\frac{1}{M
K}\sum_{M=1}^{100}\sum_{k=1}^K\sum_{j=1}^{n}\bigl(x^{\rm true}_{k,j}-\hat
x_{k|k,j}\bigr)^2\Bigr]^{1/2} \label{eq:acc:1}
\end{align}
where the subindex $j$, $j =1,\ldots,n$, refers to the $j$th entry of the $n$-dimensional state vector.

\begin{figure}
\begin{tabular}{cc}
\includegraphics[width=0.5\textwidth]{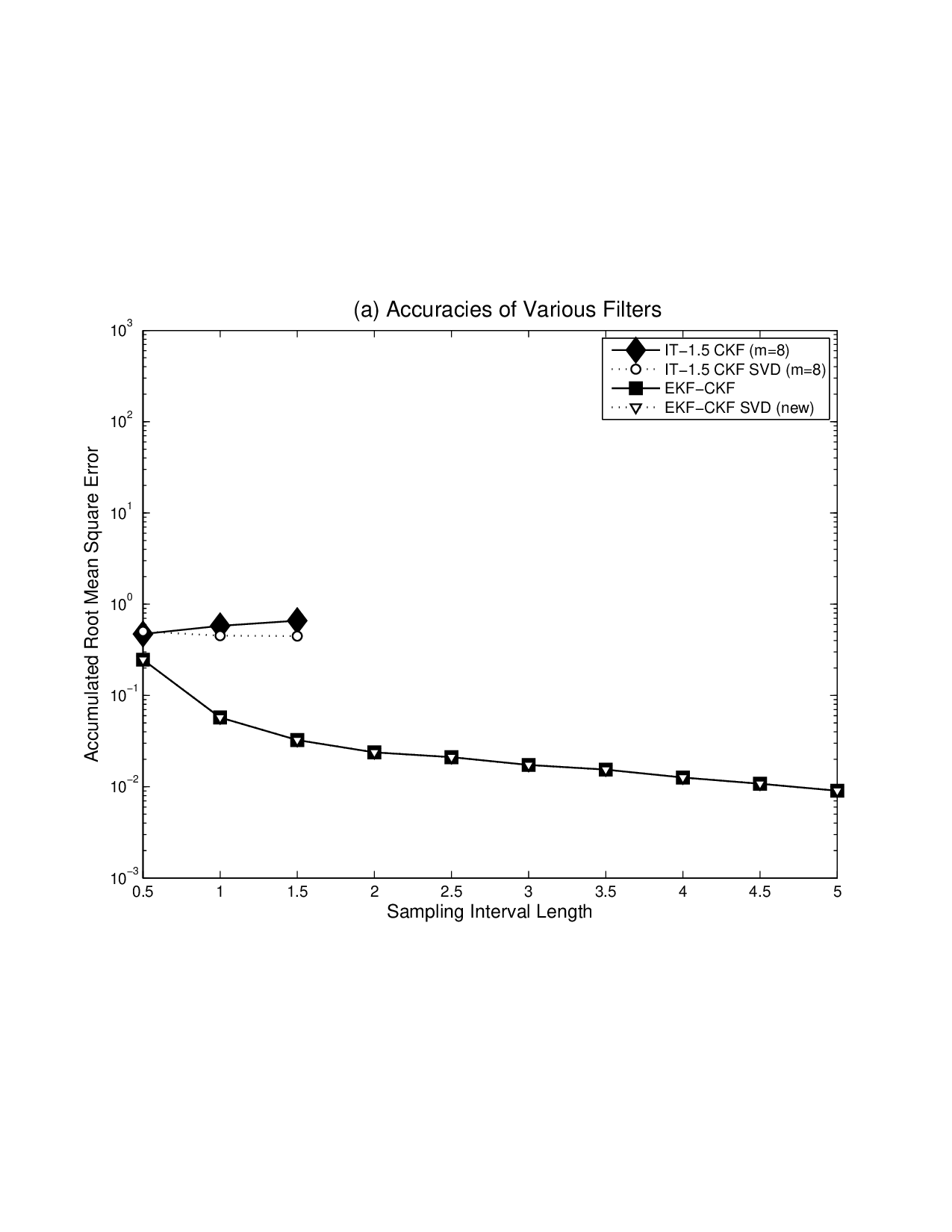} & \includegraphics[width=0.5\textwidth]{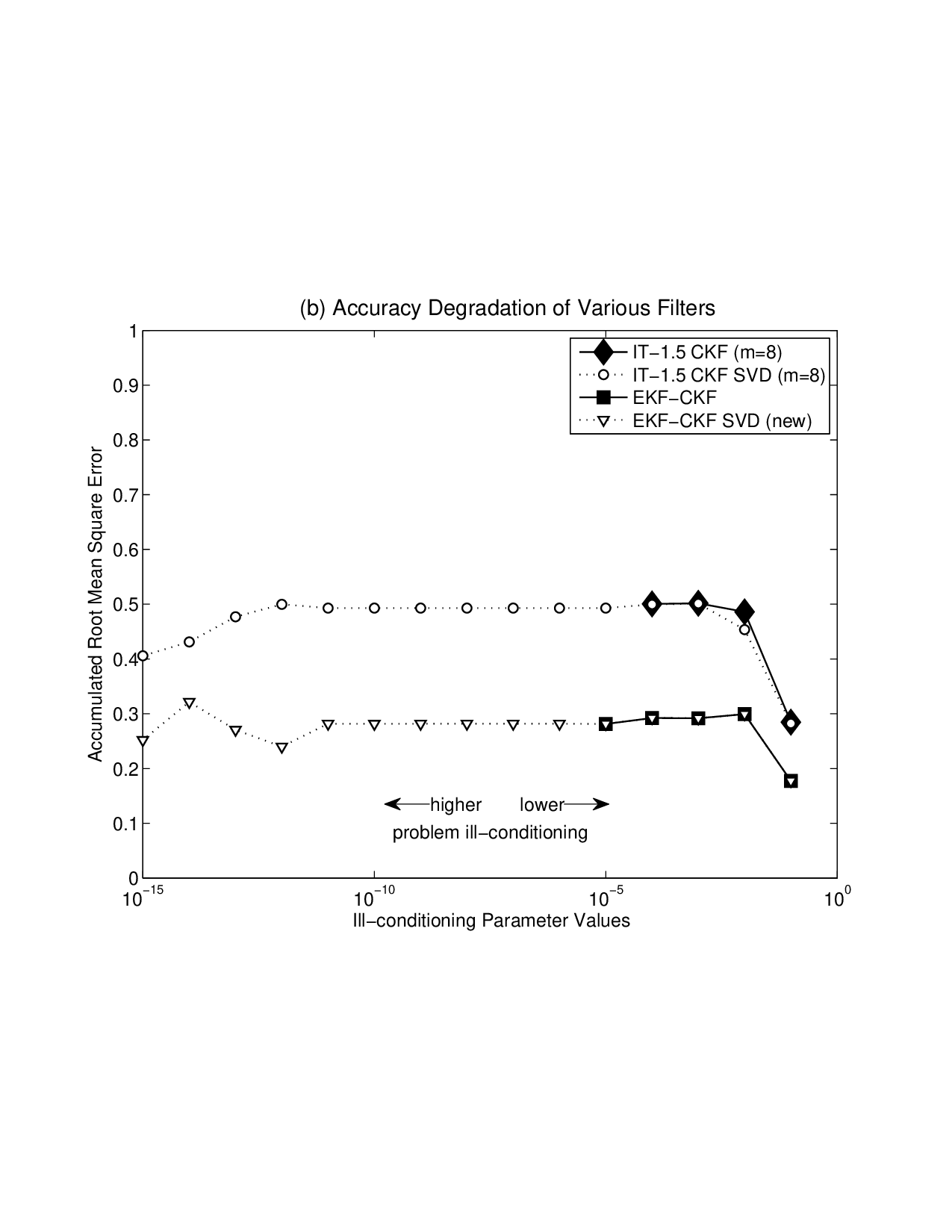}
\end{tabular}
\caption{The filters' performances in Example~\ref{ex:2}: the left graph illustrates Case~1 results, meanwhile the right plot  demonstrates the results of ill-conditioned tests under Case~2 scenario.} \label{fig:2:new}
\end{figure}

Fig.~\ref{fig:2:new}(a) illustrates the ARMSE values computed for each sampling period $\Delta$ mentioned above by using the filtering methods under examination.
In contrast to Example~1, the hybrid NIRK-based EKF-CKF estimators outperform the IT-1.5 CKF filters
for the estimation accuracy on this application example. Besides, the fixed stepsize  IT-1.5 CKF filters quickly fail to solve the filtering problem due to unfeasible Cholesky and SVD factorizations. The discretization error arisen for the sampling rates $\Delta> 1.5$(s) destroys the error covariance matrix to be decomposed and this yields a divergence of the filtering methods based on the  IT-1.5 scheme. Meanwhile, the NIRK-based EKF-CKF filters work accurately for any sampling interval length under examination due to the built-in discretization error control.

We next consider the results of ill-conditioned tests performed for Case~2 scenario in Example~2. They are illustrated by Fig.~\ref{fig:2:new}(b). As can be seen, the NIRK-based hybrid estimators are again more accurate than the IT-1.5 CKF filters. Besides, the conventional NIRK-based CKF-EKF and IT-1.5 CKF degrade faster than their SVD-based counterparts as the ill-conditioning parameter $\delta$ tends to machine precision. Indeed, the conventional IT-1.5 CKF fails for ill-conditioned state estimation problems with $\delta < 10^{-4}$. The conventional NIRK EKF-CKF filter is a bit more stable
and fails when $\delta < 10^{-5}$. Their square-root SVD-based variants are robust to roundoff errors, i.e. they are able to solve the ill-conditioned estimation problems in accurate and robust way. Following Fig.~\ref{fig:2:new}(b), we conclude that the novel SVD- NIRK-based EKF-CKF is numerically stable and the most accurate estimator among all filtering methods under examination.

Our last numerical example is focused on exploring capacities of the above filters for estimating stiff stochastic models. For that, we examine the stochastic Van der Pol oscillator, which is considered to be a classical benchmark in nonlinear filtering theory
by many authors~\cite{Frog2012,Ma08}. This test example can expose both nonstiff and stiff behaviors, depending on the value of its stiffness parameter $\lambda$. So difficulties of state estimation in stiff stochastic systems are observed by comparing performances of the filtering methods while $\lambda$ increases the period of oscillations. Our test example is rescaled as explained in~\cite[p.~5]{HaWa96}.

\begin{exmp}[stochastic Van der Pol oscillator] \label{ex:3}
Consider the following SDE:
\begin{equation}\label{eq4.1:new2}
d\left[\!\!
\begin{array}{c}
x_1(t) \\
x_2(t)
\end{array}\!\!
\right]= \left[\!\!
\begin{array}{c}
x_2(t) \\
\lambda\bigl[(1-x_1^2(t))x_2(t)-x_1(t)\bigr]
\end{array}\!\!
\right]dt +\left[\!\!
\begin{array}{cc}
0 & 0 \\
0 & 1
\end{array}\!\!
\right]dw(t)
\end{equation}
where the initial state $\bar x_0=[2,0]^\top$ and $\Pi_0={\rm diag} \{0.1, 0.1 \}$ with process covariance $Q=I_2$.

The measurement equation is taken to be
\begin{equation}\label{eq4.2:new2} z_k =[1, \: 1] x_k + v_k, \quad   \begin{array}{l}
    v_k \sim {\cal N}(0,R); \\
    R  = 0.04.
\end{array}
\end{equation}
\end{exmp}
For each value of the stiffness parameter $\lambda = 10^{0}, 10^{1}, \ldots, 10^{4}$, we perform the following set of numerical experiments. The SDE in~\eqref{eq4.1:new2} is simulated with a small stepsize $\delta_t = 10^{-5}$ on interval $[0, 2]$(s) to generate the true state vector. Next, equation~\eqref{eq4.2:new2} is utilized for creating the history of simulated measurements with the sampling rate $\Delta = 0.2$(s). Given the measurement data, the filtering problem is solved and the  hidden state is estimated by the filters under examination. We compute the accumulated root mean square error (ARMSE$_x$) in each entry of the state vector by averaging over $100$ Monte Carlo runs. Taking into account the potential stiffness of Example~3, we replace the conventional (that is, nonstiff) version of the NIRK method of order~6 utilized in the EKF-CKF estimators with its stiff version presented in~\cite{Ku20ZhVM,KuKu16SISCI,KuKu17MCS} in full detail. The obtained results are illustrated by Fig.~\ref{fig:3}.

\begin{figure}
\begin{tabular}{cc}
\includegraphics[width=0.5\textwidth]{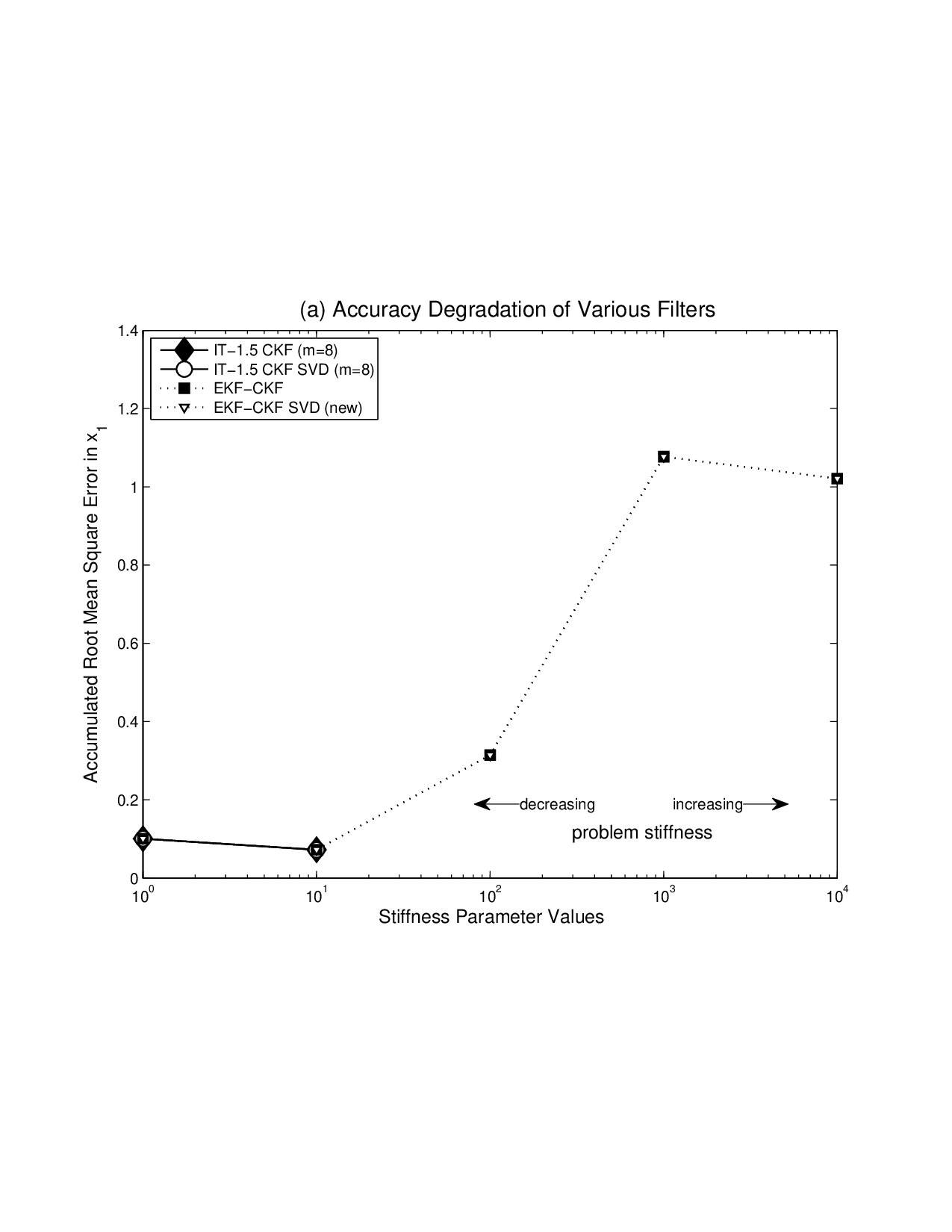} & \includegraphics[width=0.5\textwidth]{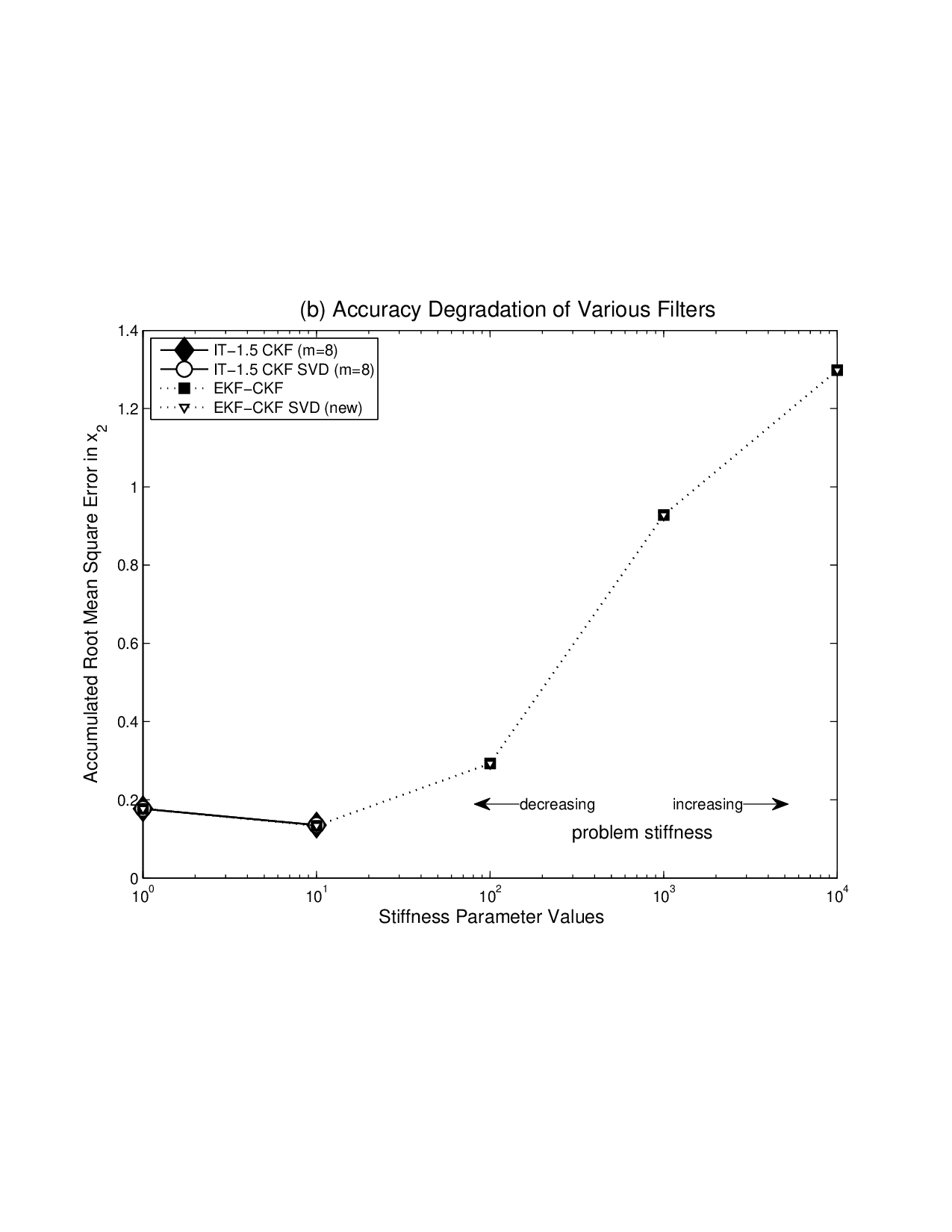}
\end{tabular}
\caption{The degradation of the filters' accuracy for various stiff parameter values in Example~\ref{ex:3}.} \label{fig:3}
\end{figure}

Having analyzed the obtained results, we conclude that both filtering strategies, i.e. the IT-1.5 CKF and NIRK-based hybrid EKF-CKF, solve the nonstiff estimation problems with the same accuracies. Besides, the estimation quality is high for both components of the hidden state vector, i.e. the estimation errors are small. However, the fixed stepsize CKF methods designed within the IT-1.5 numerical integration scheme  do not manage the stiff scenarios. As can be seen, the conventional IT-1.5 CKF and its SVD-based counterparts rapidly fail while the problem stiffness increases. More precisely, they fail for $\lambda > 10$, meanwhile the NIRK-based estimators are able to solve the stiff estimation problems till $\lambda = 10^{4}$. This creates a solid background for using the novel SVD variant of the NIRK-based hybrid EKF-CKF method for solving practical problems, including stiff scenarios as well as ill-conditioned cases.

\section{Concluding remarks} \label{conclusion}

In this paper, the mixed-type EKF-CKF filter for estimating hidden dynamic state of nonlinear stochastic systems is derived within the SVD-based spectral decomposition of the filter covariance matrices. In contrast to the previously obtained SVD-type CKF method based on the It\^{o}-Taylor expansion, the new estimator implies discretization error control while solving the related filters' moment differential equations. This yields a more accurate time update step due to the reduced discretization error and makes the new filtering method to be more robust and less sensitive to roundoff compared to the recently designed SVD-based CKF variant. Additionally, the adaptive nature of the novel estimator does not require any manual tuning prior to filtering, except the tolerance value to be given by users. The missing measurement scenario and/or irregular sampling interval cases are accurately solved in automatic mode.

There exists a space for further improvements by designing a more sophisticated CKF estimators. The solution implies a derivation of the CKF moment differential equations in terms of the SVD spectral factors of the covariance matrices involved. This allows for reducing the moment approximation error compared to the mixed-type EKF-CKF filter at the time update step meanwhile preserves all advanced features of the continuous-discrete approach related to discretization error control mentioned above, i.e. an automatic accurate and robust filtering way. However, it is presently unknown how to derive the SVD-based moment differential equations for the CKF estimation framework. 

Another important topic for a future research is a derivation of accurate nonlinear Bayesian filters by using alternative advanced ODE solvers with automatic stepsize selection and error control facilities, especially those grounded in Runge-Kutta, general linear and peer methods. We stress that the properties of the
Bayesian filters obtained heavily depend on the properties of the ODE solvers utilized for solving the related moment differential equations in the filtering scheme chosen for estimation. Thus, the development of a variety of the advanced filtering schemes and, then, investigation of their performance in comparative study when solving practical applications are also interesting topics for a future research.

\section*{Acknowledgments}
The authors acknowledge the financial support of the Portuguese FCT~--- \emph{Funda\c{c}\~ao para a Ci\^encia e a Tecnologia}, through the projects UIDB/04621/2020 and UIDP/04621/2020 of CEMAT/IST-ID, Center for Computational and Stochastic Mathematics, Instituto Superior T\'ecnico, University of Lisbon.

\section*{Appendix}

Here, we briefly note that system~\eqref{eq2.1} is solved by the variable-stepsize Gauss-type embedded NIRK pair of orders~4 and~6 with the built-in automatic combined local-global error control suggested in~\cite{2013:Kulikov:IMA}.  The details of the NIRK-based EKF-CKF estimator can be found in~\cite[Section~3]{2017:Kulikov:ANM}. Here, we briefly summarize the numerical scheme to be implemented on each sampling interval $[t_{k-1},t_k]$ of the EKF-CKF
as follows:
\begin{align}
\hat x_{l1}^2 & = a^2_{11} \hat x_l+a^2_{12} \hat x_{l+1} +\tau_l\bigl[d^2_{11}\!f(t_l,\hat x_l\!)\!+\!d^2_{12}f(t_{l+1},\hat x_{l+1}\!)\bigr], \nonumber \\
\hat  x_{l2}^2 & = a^2_{21} \hat x_l+a^2_{22} \hat x_{l+1} + \tau_l\bigl[d^2_{21}\!f(t_l,\hat x_l\!)\!+\!d^2_{22}f(t_{l+1},\hat x_{l+1}\!)\bigr], \nonumber \\
\hat x_{l1}^3 & = a^3_{11} \hat x_l+a^3_{12} \hat x_{l+1} + \tau_l\bigl[d^3_{11}\!f(t_l,\hat x_l\!)\!+\!d^3_{12}f(t_{l+1},\hat x_{l+1}\!) + d^3_{13}f(t_{l1}^2,\hat x_{l1}^2) + d^3_{14}f(t_{l2}^2,\hat x_{l2}^2)\bigr], \nonumber \\
\hat x_{l2}^3 & = a^3_{21} \hat x_l+a^3_{22} \hat x_{l+1} + \tau_l \bigl[d^3_{21}\!f(t_l,\hat x_l\!)\!+\!d^3_{22}f(t_{l+1},\hat x_{l+1}\!) + d^3_{23}f(t_{l1}^2,\hat x_{l1}^2) + d^3_{24}f(t_{l2}^2,\hat x_{l2}^2)\bigr], \nonumber \\
\hat x_{l3}^3 & = a^3_{31} \hat x_l+a^3_{32} \hat x_{l+1} +\tau_l\bigl[d^3_{31}\!f(t_l,\hat x_l\!)\!+\!d^3_{32}f(t_{l+1},\hat x_{l+1}\!)  + d^3_{33}f(t_{l1}^2,\hat x_{l1}^2) + d^3_{34}f(t_{l2}^2,\hat x_{l2}^2)\bigr], \nonumber \\
\hat x_{l+1} & = \hat x_{l}+\tau_l\bigl[b_1 f(t_{l1}^3,\hat x_{l1}^3)\!+\!b_2 f(t_{l2}^3,\hat x_{l2}^3) +b_3 f(t_{l3}^3,\hat x_{l3}^3)\bigr],\quad l=0,1,\ldots, end-1, \label{eq2.3}
\end{align}
where $\tau_l:=t_{l+1}-t_{l}$ is the step size of $\{t_l\}_{l=0}^{end}:=\left\{t_{l+1}=t_{l}+\tau_l, l=0,1,\ldots,end-1, t_0=t_{k-1},t_{end}=t_{k}\right\}$, and all constant coefficients of discretization~\eqref{eq2.3} are given; e.g., they are published
in~\cite{2016:Kulikov:SISCI}. The stage values $\hat x_{lj}^i:=\hat x(t_{lj}^i)$ mean approximations to the states evaluated at the time instants $t_{lj}^i=t_l+c_j^i\tau_l$. In summary, the discretized equation~\eqref{eq2.3} is iterated for an approximate state $\hat x_{l+1}$ at every node of the mesh $\{t_l\}_{l=0}^{end}$.

The {\em local error} ${le}_{l+1}$ associated with the predicted state mean vector ${\hat x}_{l+1}$ from (\ref{eq2.3}) is defined as follows:
\begin{equation}
le_{l+1} := \frac{\tau_l}{3}\Bigl[\frac23f(t_{l2}^3,\hat x_{l2}^{3})-\frac56f(t_{l1}^3,\hat x_{l1}^{3})-\frac56f(t_{l3}^3,\hat x_{l3}^{3}) + \frac12f(t_l,\hat x_{l})+\frac12f(t_{l+1},\hat x_{l+1})\Bigr].\label{eq2.4}
\end{equation}

The local error vector (\ref{eq2.4}) is measured in the scaled sense as
\begin{equation}\label{eq2.5}
|{le}_{l+1}|_{sc}:=\max_{i=1,2,\ldots,n}\bigl\{{|{le}_{i,l+1}|}/(|{\hat x}_{i,l+1}|+1)\bigr\}
\end{equation}
where the subscript~$i$ stands for the $i$-th entry in each
vector and $n$ implies the size of SDE~(\ref{eq1.1}). The magnitude $|{le}_{l+1}|_{sc}$ is referred to the {\em scaled local error} estimated at time $t_{l+1}$.

The {\em global} (or {\em true}) {\em error} of the ACD-EKF is evaluated on the discretization mesh~$\{t_l\}_{l=0}^{end}$ by the simple formula
\begin{equation}\label{eq2.6}
\Delta\hat x_{l+1}=\Delta\hat x_{l}-le_{l+1}
\end{equation}
where the vector $\Delta\hat x_{l+1}$ stands for the global error estimated at time $t_{l+1}$. The initial error $\Delta\hat x_{0}$ is always set to be zero. The global error (\ref{eq2.6}) is then assessed in the scaled sense
\begin{equation}\label{eq2.7}
|\Delta\hat x_{l+1}|_{sc}:=\max_{i=1,2,\ldots,n}\bigl\{{|\Delta\hat x_{i,l+1}|}/(|\hat
x_{i,l+1}|+1)\bigr\}
\end{equation}
where the first subscript~$i$ means the $i$-th entry in the corresponding vector. Magnitude in~\eqref{eq2.7} is referred to as the {\em scaled global error} estimated at a mesh node $t_{l+1}$.

Having computed the predicted state mean $\hat x_{l+1}$ at time $t_{l+1}$ with the original (or adjusted) stepsize $\tau_l$, for which the committed scaled local error $|\widetilde {le}_{l+1}|_{sc}$ does not exceed the bound $\epsilon_{loc}$, we solve the last set of the EKF moment differential equations in~\eqref{eq2.2}. For that, we use the same stepsize $\tau_l$ and the numerical scheme suggested in~\cite{2008:Mazzoni}
\begin{equation*}
P_{l+1}=M_{l+1/2} P_{l} M_{l+1/2}^{\top}+\tau_{l} K_{l+1/2} GQG^{\top} K_{l+1/2}^{\top}
\end{equation*}
where $t_{l+1/2}:=t_{l}+\tau_{l}/2$ is the mid-point of the $(l+1)$-st
step and
\begin{align*}
K_{l+1/2} & =\left[I_n - \frac{\tau_{l}}{2}F(t_{l+1/2}, \hat x_{l+1/2})\right]^{-1},  & M_{l+1/2} & = K_{l+1/2}\left[I_n + \frac{\tau_{l}}{2}F(t_{l+1/2}, \hat x_{l+1/2})\right].
\end{align*}


\section*{References}

\end{document}